\newtheorem*{theo*}{Theorem}
\newtheorem{theorem}{Theorem}
\newtheorem{lemma}{Lemma}
\newtheorem*{rem*}{Remark}
\let\originalleft\left
\let\originalright\right
\renewcommand{\left}{\mathopen{}\mathclose\bgroup\originalleft}
\renewcommand{\right}{\aftergroup\egroup\originalright}
\definecolor{pink}{rgb}{1,.2,.6}
\definecolor{orange}{rgb}{0.7,0.3,0}
\definecolor{blue}{rgb}{.2,.6,.75}
\definecolor{green}{rgb}{.4,.7,.4}
\newcommand{\suppress}[1]{}
\numberwithin{equation}{section}
\title{On the trace of the integers of a number field} 
\author[F.~Battistoni]{Francesco Battistoni}
\address{Dipartimento di Matematica, Università degli Studi di Milano, Via Saldini 50, 20133 Milano, Italy}
\email{francesco.battistoni@unimi.it}
\author[T.~Za\"{\i}mi]{Toufik Za\"{\i}mi}
\address{Department of Mathematics and Statistics, College of Science, Imam Mohammad Ibn Saud Islamic University (IMSIU), P. O. Box 90950, Riyadh 11623, Saudi Arabia}
\email{tmzaemi@imamu.edu.sa}
\begin{document}

\begin{abstract}
    Let $Tr$ denote the trace $%
    \mathbb{Z}$-module homomorphism defined on the ring $\mathcal{O}%
_{L} $ of the integers of a number field $L.$ We show that 
$Tr(\mathcal{O}_{L})\varsubsetneq \mathbb{Z}$ if and only if there
is a prime \ factor $p$ of the degree of $L$ such that
if $\wp _{1}^{e_{1}}...\wp _{s}^{e_{s}}$ is the prime factorization
of the ideal $p\mathcal{O}_{L}$ in $\mathcal{O}_{L},$
then $p$ divides all powers $e_{1},...,e_{s}.$ Also, we
prove that the equality $Tr(\mathcal{O}_{L})=\mathbb{Z}$ holds
when $L$ is the compositum of certain number fields. 
\end{abstract}

\subjclass[2020]{11R04, 11R29, 11S15}

\keywords{Trace forms, discriminants, wild number fields, ramification indexes}

\maketitle

\section{ Introduction}

For a finite extension $L$ of a number field $K,$ the trace homomorphism is
the mapping $T_{L/K}:L\rightarrow K,$ defined for all $\alpha \in L$ by 

\[
\text{ \ \ \ \ }T_{L/K}(\alpha )=\sigma _{1}(\alpha )+\cdot \cdot \cdot
+\sigma _{d}(\alpha ), 
\]%

where $\sigma _{1},...,\sigma _{d}$ denote the $d=[L:K]$ embeddings of $L$
in the field of complex numbers $\mathbb{C}$ which fix $K$ pointwise. Then,
the function $T_{L/K}$ is $K$-linear, i.e., $T_{L/K}(k\alpha +\alpha
^{\prime })=kT_{L/K}(\alpha )+T_{L/K}(\alpha ^{\prime }),$ $\ \forall
(k,\alpha ,\alpha ^{\prime })\in K\times L\times L,$ and satisfies $%
T_{L/K}(L)=K,$ since $T_{L/K}(k/d)=k$ \ for all $k\in K.$ Also, if $M$ is a
finite extension of $L,$ then 

\begin{equation}\label{eq1}\tag{1}
T_{M/K}(\beta )=T_{L/K}(T_{M/L}(\beta )),\text{ \ }\forall \beta \in M. 
\end{equation}%

Let $\mathcal{O}_{L}$ be the ring of the integers of $L.$ Then, $\mathcal{O}%
_{L}$ is an $\mathcal{O}_{K}$-module, and the restriction, say $T_{\mathcal{O%
}_{L}/\mathcal{O}_{K}},$ of $T_{L/K}$ to $\mathcal{O}_{L}$ is an $\mathcal{O}%
_{K}$-module homomorphism whose range is a non-zero ideal of $\mathcal{O}%
_{K}.$ In particular, if $K$ is the field of the rational numbers $\mathbb{Q}%
,$ then there is a unique natural number $t_{L}$ such that 
\[
T_{\mathcal{O}_{L}/\mathbb{Z}}(\mathcal{O}_{L})=t_{L}\mathbb{Z}, 
\]%
where $\mathbb{Z=}\mathcal{O}_{\mathbb{Q}}$ is the ring of the rational
integers. Moreover, the integer $t_{L}$ divides $T_{\mathcal{O}_{L}/\mathbb{Z%
}}(1)=\deg (L)=d,$ and the operator $T_{\mathcal{O}_{L}/\mathbb{Z}}$ is
surjective if and only if $t_{L}=1.$

In \cite{battistoniTrace} the first named author gave conditions on the field $L$ under which
the equality $t_{L}=1$ holds. For example \cite[Theorem 1]{battistoniTrace} (see also \cite[Corollary 5]{narkiewicz2013elementary}) says that $T_{\mathcal{O}_{L}/\mathbb{Z}}$ is surjective
when $L$ is tame. Recall that the field $L$ is said to be tame (over $%
\mathbb{Q}$) whenever each prime number $p$ is tamely ramified in $L,$ i.
e., if $\wp _{1}^{e_{1}}...\wp _{s}^{e_{s}}$ is the prime factorization of
the ideal $p\mathcal{O}_{L}$ in $\mathcal{O}_{L},$ then $p$ does not divide
any $e_{i},$ $\forall $ $i\in \{1,...,s\}.$ A wild number field is a number
field which is not tame, and a prime number which is not tamely ramified in $%
L$ is said to be wildly ramified in $L.$

Using a related result of Dedekind, stated in the next section, it is easy
to see that if $p$ is wildly ramified in $L,$ then $p^{p}$ divides the
(absolute) discriminant $\text{disc}(L)$ of $L.$ Therefore, if $\text{%
disc}(L)$ is squarefree, then $L$ is tame and so $t_{L}=1.$

Another surjectivity criterion, stated in \cite{battistoniTrace} for certain wild fields, says
that if $\text{disc}(L)$ is not divisible by the squares of the prime
factors of $d,$ then $T_{\mathcal{O}_{L}/\mathbb{Z}}(\mathcal{O}_{L})=%
\mathbb{Z}.$ In fact, we can obtain more: writing $\text{disc}(L)$ as the
determinant of the matrix $[T_{\mathcal{O}_{L}/\mathbb{Z}}(\mathcal{\omega }%
_{i}\mathcal{\omega }_{j})]_{1\leq i,j\leq d},$ where $\{\mathcal{\omega }%
_{1},...,\mathcal{\omega }_{d}\}$ is a base of the free $\mathbb{Z}$-module $%
\mathcal{O}_{L},$ we see that $t_{L}^{d}$ divides $\text{disc}(L),$ and
so if $T_{\mathcal{O}_{L}/\mathbb{Z}}$ is non-surjective, then there is a
prime number $p,$ dividing $t_{L}$ (and so $d),$ such that $p^{d}$ is a
factor of $\text{disc}(L).$ The aim of the present note is to continue
the investigation of this question.

In order to state our first result, let us recall some related notions (for
more details see \cite[Section 4.2]{narkiewicz2013elementary}). The codifferent of the ring $\mathcal{O}%
_{L}$ (over $\mathbb{Q)}$ is the set \ 

\[
\widehat{\mathcal{O}_{L}}:=\{\alpha \in L\mid T_{L/\mathbb{Q}}(\alpha 
\mathcal{O}_{L}\mathcal{)}\subset \mathbb{Z}\}. 
\]%

It is clear that $\mathcal{O}_{L}\subset \widehat{\mathcal{O}_{L}},$ and $%
\widehat{\mathcal{O}_{L}}$ is an $\mathcal{O}_{L}$-submodule of $L.$ Let $\{%
\widehat{\mathcal{\omega }_{1}},...,\widehat{\mathcal{\omega }_{d}}\}$ be
the dual base of $\{\mathcal{\omega }_{1},...,\mathcal{\omega }_{d}\},$ that
is, the $\mathbb{Q}$-base of $L$ defined by the equalities $T_{L/\mathbb{Q}}(%
\mathcal{\omega }_{i}\widehat{\mathcal{\omega }_{j}})=1$ if $i=j,$ and $T_{L/%
\mathbb{Q}}(\mathcal{\omega }_{i}\widehat{\mathcal{\omega }_{j}})=0$
otherwise. Then, $\{\widehat{\mathcal{\omega }_{1}},...,\widehat{\mathcal{%
\omega }_{d}}\}$ is a $\mathbb{Z}$-base of $\widehat{\mathcal{O}_{L}},$ and
so there is $b\in \mathbb{Z\diagdown }\{0\}\subset \mathcal{O}_{L}\mathbb{%
\diagdown }\{0\}$ such that $b\widehat{\mathcal{O}_{L}}\subset \mathcal{O}%
_{L}.$ Hence, $\widehat{\mathcal{O}_{L}}$ is fractional ideal in $L,$ and
its inverse

\[
\mathcal{D}_{L}:=\widehat{\mathcal{O}_{L}}^{-1}=\{\alpha \in L\mid \alpha 
\widehat{\mathcal{O}_{L}}\subset \mathcal{O}_{L}\}, 
\]%
namely the different of $L,$ is also a fractional ideal in $L.$ In fact, $%
\mathcal{D}_{L}$ is an ideal of $\mathcal{O}_{L},$ because if $\alpha \in 
\mathcal{D}_{L},$ then $\alpha \mathcal{O}_{L}\subset \alpha \widehat{%
\mathcal{O}_{L}}\subset \mathcal{O}_{L}$ and so $\alpha \in \mathcal{O}_{L}.$

\begin{theorem}\label{theorem1}
Let $L$ be a number field of degree $%
d, $\textit{\ }$\mathcal{D}_{L}$ the different of $L,$ and 
$t_{L}\mathbb{Z}$ the range of the trace operator on the ring $%
\mathcal{O}_{L}$ of the integers of $L.$ Then, the
following assertions are equivalent:

\begin{itemize}

\item[(i)] $t_{L}\geq 2;$

\item[(ii)] there is a prime factor $p$ of $d$ such that
if $\wp _{1}^{e_{1}}...\wp _{s}^{e_{s}}$ is the prime factorization
of the ideal $p\mathcal{O}_{L}$ in $\mathcal{O}_{L},$
then $p$ divides all powers $e_{1},...,e_{s};$

\item[(iii)] there is a prime number $p$ such that if $\wp
_{1}^{e_{1}}...\wp _{s}^{e_{s}}$ is the prime factorization of the
ideal $p\mathcal{O}_{L}$ in $\mathcal{O}_{L},$ then $p$%
 divides all integers $e_{1},...,e_{s}.$
\end{itemize}
\end{theorem}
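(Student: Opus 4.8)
The plan is to prove the cycle of implications (iii) $\Rightarrow$ (i) $\Rightarrow$ (ii) $\Rightarrow$ (iii), where the implication (ii) $\Rightarrow$ (iii) is trivial (a prime factor of $d$ is in particular a prime number), so the real content is in the other two arrows.

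For (iii) $\Rightarrow$ (i): suppose $p$ divides every ramification index $e_i$ in $p\mathcal{O}_L = \wp_1^{e_1}\cdots\wp_s^{e_s}$. I would work locally and use the conductor–discriminant / different description of the codifferent. The key point is that, localizing at $p$, the completion $L_{\wp_i}/\mathbb{Q}_p$ is wildly ramified with $p \mid e_i$, and the exponent $d_i$ of $\wp_i$ in the different $\mathcal{D}_L$ satisfies $d_i \geq e_i$ (in fact $d_i \geq e_i$ always, with strict inequality exactly in the wild case). The trace $T_{L/\mathbb{Q}}(\mathcal{O}_L)$ is computed by $t_L\mathbb{Z} = \sum_\alpha T_{L/\mathbb{Q}}(\alpha\mathcal{O}_L)$ restricted appropriately; more precisely I would use that $t_L$ is exactly the largest integer $t$ such that $\frac{1}{t}\mathcal{O}_L \subseteq \widehat{\mathcal{O}_L}$, equivalently $t \cdot \widehat{\mathcal{O}_L} \supseteq \mathcal{O}_L$ fails to be improvable — so $p \mid t_L$ iff $\frac{1}{p}\mathcal{O}_L \subseteq \widehat{\mathcal{O}_L}$, i.e. iff $p\widehat{\mathcal{O}_L}^{-1} = p\mathcal{D}_L \subseteq \mathcal{O}_L$ with the ``right'' divisibility, i.e. iff $v_{\wp_i}(p\mathcal{D}_L^{-1}) \geq 0$... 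I need to get the direction right: $\frac1p\mathcal{O}_L\subseteq\widehat{\mathcal{O}_L}$ means $p^{-1}\mathcal{O}_L\subseteq \mathcal{D}_L^{-1}$, i.e. $\mathcal{D}_L \subseteq p\mathcal{O}_L$, i.e. for every $i$, $d_i \geq e_i\cdot v_p(p) = e_i$. So the statement ``$p\mid t_L$'' is equivalent to ``$d_i \geq e_i$ for all $i$ above $p$''. Since $d_i \geq e_i$ always holds, this looks automatic — so I must be more careful: the correct characterization is that $p \mid t_L$ iff $\frac{1}{p}\mathbb{Z} \not\subseteq T_{L/\mathbb{Q}}(\frac{1}{p}\mathcal{O}_L)$... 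Let me restate: $t_L \mathbb{Z} = T_{L/\mathbb{Q}}(\mathcal{O}_L)$, so $p \mid t_L$ iff $\frac1p\mathbb{Z}\not\subseteq \frac1p T_{L/\mathbb{Q}}(\mathcal{O}_L) = T_{L/\mathbb{Q}}(\frac1p\mathcal{O}_L)$, iff $\frac1p\mathcal{O}_L \not\subseteq \widehat{\mathcal{O}_L}$ (no: $1 \notin T_{L/\mathbb{Q}}(\frac1p\mathcal{O}_L)$ is weaker than $T_{L/\mathbb{Q}}(\frac1p\mathcal{O}_L)\not\subseteq\mathbb{Z}$). The cleanest route: $p \mid t_L \iff T_{L/\mathbb{Q}}(\mathcal{O}_L) \subseteq p\mathbb{Z} \iff T_{L/\mathbb{Q}}(\frac1p\mathcal{O}_L)\subseteq\mathbb{Z} \iff \frac1p\mathcal{O}_L\subseteq\widehat{\mathcal{O}_L} = \mathcal{D}_L^{-1} \iff \mathcal{D}_L \subseteq p\mathcal{O}_L \iff v_{\wp_i}(\mathcal{D}_L) \geq v_{\wp_i}(p\mathcal{O}_L) = e_i$ for all $i \mid p$.

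So the heart of (iii) $\Rightarrow$ (i) becomes: if $p \mid e_i$ for all $i$, then $d_i \geq e_i$ for all $i$, where $d_i = v_{\wp_i}(\mathcal{D}_L)$. This is where I would invoke the standard local formula for the different exponent: $d_i = e_i - 1$ in the tame case, and $d_i \geq e_i$ in the wild case, with the precise statement $e_i - 1 \leq d_i \leq e_i - 1 + v_{\wp_i}(e_i)$ — so $p \mid e_i$ forces $v_{\wp_i}(e_i) \geq 1$, hence $d_i \geq e_i - 1 + 1 = e_i$. Wait, the lower bound $d_i \geq e_i - 1 + v_{\wp_i}(e_i)$ is not standard in that direction; the standard facts are the \emph{upper} bound $d_i \leq e_i - 1 + v_{\wp_i}(e_i)$ and the statement that $d_i > e_i - 1$ iff $p \mid e_i$ (wild ramification). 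If $p \mid e_i$ and $d_i > e_i - 1$, then since $d_i$ is an integer $d_i \geq e_i$. That does it. So the only external input needed is: wild ramification at $\wp_i$ ($\iff p \mid e_i$) forces $d_i \geq e_i$, and this is exactly the Dedekind criterion alluded to in the introduction (``$p$ wildly ramified $\Rightarrow p^p \mid \mathrm{disc}(L)$'' is the consequence of summing $d_i$).

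For (i) $\Rightarrow$ (ii): contrapositive. Suppose that for every prime $p \mid d$ there is some $i$ with $p \nmid e_i$. I want $p \nmid t_L$ for every such $p$, hence (since $t_L \mid d$) $t_L = 1$. By the computation above, $p \mid t_L \iff d_i \geq e_i$ for \emph{all} $i \mid p$. So it suffices to exhibit one $i$ with $d_i < e_i$, i.e. $d_i \leq e_i - 1$; but $d_i \leq e_i - 1$ always when $\wp_i$ is tame, and tameness is exactly $p \nmid e_i$, which is our hypothesis for that $i$. Actually I should double check that the ``for all $i$'' in the characterization of $p\mid t_L$ is correct — yes, $\mathcal{D}_L \subseteq p\mathcal{O}_L$ requires the valuation inequality at every prime above $p$ (at primes not above $p$, $v(\mathcal{D}_L)=v(p\mathcal{O}_L)=0$ automatically). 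So a single tame prime $\wp_i$ above $p$ kills divisibility of $t_L$ by $p$. Then ranging over all prime factors $p$ of $d$ gives $\gcd(t_L, d) = 1$, and with $t_L \mid d$ we conclude $t_L = 1$, i.e. (i) fails.

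The main obstacle — really the only nontrivial ingredient — is establishing the arithmetic of the different exponent: the equivalence ``$\wp_i$ tame $\iff d_i = e_i - 1$'' and ``$\wp_i$ wild $\iff d_i \geq e_i$''. I would either cite this (it is the local different computation, available e.g. in the same reference \cite{narkiewicz2013elementary}, Section 4.2, or Serre's \emph{Local Fields} III.6) or give the short argument via $d_i = v_{\wp_i}(g'(\pi))$ for a suitable local uniformizer $\pi$ with minimal polynomial $g$, noting that $g'$ has valuation $\geq e_i$ precisely when $e_i \equiv 0$ modulo the residue characteristic. Everything else is the bookkeeping translation ``$p \mid t_L \iff \mathcal{D}_L \subseteq p\mathcal{O}_L$'' via the definition of the codifferent, which is essentially unwinding the definitions recalled just before the theorem statement, plus the elementary fact that $t_L \mid d$ already noted in the introduction.
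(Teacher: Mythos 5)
Your proposal is correct and takes essentially the same route as the paper: the characterization $p \mid t_L \iff \mathcal{D}_L \subseteq p\mathcal{O}_L$ that you extract from the definition of the codifferent is exactly the paper's Lemma \ref{lemma1}, and the input on the different exponent (tame $\Rightarrow$ exponent $e-1$, wild $\Rightarrow$ exponent $\geq e$, with the Dedekind--Hensel upper bound never actually needed) is the paper's Lemma \ref{lemma2}. The only cosmetic difference is organizational: you close the cycle as (iii)$\Rightarrow$(i)$\Rightarrow$(ii)$\Rightarrow$(iii) and prove (i)$\Rightarrow$(ii) by contrapositive using $t_L \mid d$, while the paper argues (i)$\Rightarrow$(ii) directly from a prime factor of $t_L$.
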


For example, if the degree $d$ of $L$ has a prime factor, say again $p,$
which is totally ramified in $L,$ i. e., there is a prime ideal $\wp $ of 
\textit{\ }$\mathcal{O}_{L}$ such that $\wp ^{d}$ is the prime factorization
of $p\mathcal{O}_{L}$\textit{\ }in\textit{\ }$\mathcal{O}_{L},$ then Theorem \ref{theorem1}.(ii) is true and so $t_{L}\geq 2.$ This holds if and only if $L=\mathbb{Q}%
(\alpha ),$ where $\alpha $ is a root of a monic integer polynomial which is
Eisenstein at a prime factor of its degree. 

In view of Theorem \ref{theorem1} and Lemma \ref{lemma2} below, the following theorem is immediate.

\begin{theorem}\label{theorem2}

 With the notation above, consider the following
three propositions: 

\begin{itemize}
    \item[(A)] $t_{L}\geq 2;$
    \item[(B)] there is a prime factor $p$ of $d$ such
that $p^{d}$ divides $\text{disc}(L);$
\item[(C)] $L$ is wild. 
\end{itemize}

Then, %
\[
(A)\Rightarrow (B)\Rightarrow (C). 
\]%

Moreover, %
\begin{equation}\label{eq2}
(B)\Rightarrow (A)  \tag{2}
\end{equation}%

when $d$ is prime or $d=4,$ and if $%
L $ is normal (over $\mathbb{Q}),$ then  
\[
\mathit{\ }(C)\Rightarrow (A). 
\]
\end{theorem}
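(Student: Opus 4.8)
The plan is to prove Theorem~\ref{theorem2} by combining Theorem~\ref{theorem1}, Lemma~\ref{lemma2} (referenced above, which one expects records the divisibility $t_L^d \mid \operatorname{disc}(L)$ and the Dedekind criterion linking wild ramification at $p$ to $p^p \mid \operatorname{disc}(L)$), and the Dedekind different--discriminant formula. First I would establish $(A)\Rightarrow(B)$: if $t_L\geq 2$, then by the discussion preceding Theorem~\ref{theorem1} (writing $\operatorname{disc}(L)=\det[T_{\mathcal{O}_L/\mathbb{Z}}(\omega_i\omega_j)]$ with each entry divisible by $t_L$), we get $t_L^d\mid \operatorname{disc}(L)$, and since $t_L\mid d$, any prime factor $p$ of $t_L$ is a prime factor of $d$ with $p^d\mid \operatorname{disc}(L)$. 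Then $(B)\Rightarrow(C)$: if $p\mid d$ and $p^d\mid\operatorname{disc}(L)$, then certainly $p^2\mid\operatorname{disc}(L)$, but a tame field has the property that the exact power of $p$ dividing $\operatorname{disc}(L)$ equals $\sum_i(e_i-1)f_i$ with $p\nmid e_i$; more simply, a squarefree-at-$p$ contribution forces tameness, so $p$ must be wildly ramified and $L$ is wild. (Here one should cite the conductor--discriminant or the elementary estimate already invoked in the introduction, namely that wild ramification at $p$ is equivalent to $p\mid v_{\wp}(\mathcal{D}_L)$ for some $\wp\mid p$.)

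Next I would prove the partial converse $(B)\Rightarrow(A)$ for $d$ prime or $d=4$. When $d=p$ is prime: if $p^d\mid\operatorname{disc}(L)$ with $p$ the only candidate prime factor of $d$, then $p$ is ramified in $L$; since $[L:\mathbb{Q}]=p$ and ramification indices $e_i$ satisfy $\sum e_if_i=p$, the only way to have any ramification is a single prime with $e=p$, i.e. $p$ is totally ramified, so every $e_i$ (there is one) is divisible by $p$, and Theorem~\ref{theorem1}(ii) applies, giving $t_L\geq 2$. When $d=4$: the prime factor of $d$ is $p=2$, and I would enumerate the possible splitting types of $2\mathcal{O}_L$ compatible with $2^4\mid\operatorname{disc}(L)$. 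Using the different--discriminant relation $v_2(\operatorname{disc}(L))=\sum_{\wp\mid 2} f_\wp\, v_\wp(\mathcal{D}_L)$ and the bound $v_\wp(\mathcal{D}_L)\leq e_\wp-1+v_\wp(e_\wp)$ (with equality subtleties in the wild case), one checks that $v_2(\operatorname{disc}(L))\geq 4$ forces every $e_\wp$ to be even; the cases to rule out are those with some odd $e_\wp$ (e.g. $e=1$ or $e=3$ somewhere), and a short contribution count shows these cannot reach exponent $4$. Then Theorem~\ref{theorem1}(iii) with $p=2$ yields $t_L\geq 2$.

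Finally, for the normal case $(C)\Rightarrow(A)$: if $L/\mathbb{Q}$ is Galois and wild, pick a prime $p$ wildly ramified in $L$. By normality all the $e_i$ are equal to a common value $e$, and all $f_i$ equal a common $f$, with $efg=d$; wild ramification means $p\mid e$. In particular $p\mid e\mid d$, so $p$ is a prime factor of $d$, and every $e_i=e$ is divisible by $p$ — this is exactly condition (ii) of Theorem~\ref{theorem1}, hence $t_L\geq 2$. So the normal case is essentially immediate from Theorem~\ref{theorem1} once one invokes the equidistribution of ramification data in a Galois extension.

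The main obstacle I anticipate is the $d=4$ case of $(B)\Rightarrow(A)$: unlike the prime-degree case, there are several admissible decomposition types of $2\mathcal{O}_L$ (such as $\wp_1^2\wp_2^2$, $\wp^4$, $\wp_1^2\wp_2\wp_3$, etc.), and one must carefully bound, for each, the $2$-adic valuation of the discriminant via the different — paying attention to the fact that in the wildly ramified case $v_\wp(\mathcal{D}_L)$ can exceed $e_\wp-1$, so the naive tame formula does not suffice and one needs the higher-ramification-group input $v_\wp(\mathcal{D}_L)=\sum_{i\geq 0}(|G_i|-1)$. Showing that every type with an odd ramification index has $v_2(\operatorname{disc}(L))\leq 3$, while the remaining types automatically satisfy condition (iii), is the crux; the other implications are formal consequences of Theorem~\ref{theorem1} and the differential machinery already set up in the excerpt.
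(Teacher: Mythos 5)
Your overall route is the paper's route: derive (A)$\Rightarrow$(B) from $t_L^d\mid\text{disc}(L)$ (equivalently, from $\mathcal{D}_L\subset p\mathcal{O}_L$ and norms), get (B)$\Rightarrow$(C) from the tame exponent formula $\sum_i(e_i-1)f_i=d-\sum_i f_i<d$, handle $d=4$ by enumerating the splitting types of $2\mathcal{O}_L$ and killing the types $\wp_1^2\wp_2$ and $\wp_1^2\wp_2\wp_3$ with the Dedekind--Hensel bound $v_\wp(\mathcal{D}_L)\le e-1+e\,v_p(e)$ from Lemma~\ref{lemma2}, and get (C)$\Rightarrow$(A) for normal $L$ from transitivity of the Galois action (all $e_i$ equal) plus Theorem~\ref{theorem1}. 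Those parts are correct and essentially identical to the paper, modulo your not writing out the strict inequality in (B)$\Rightarrow$(C) and the (false, but inessential) parenthetical claim that wildness at $p$ is \emph{equivalent} to $p\mid v_\wp(\mathcal{D}_L)$ for some $\wp\mid p$: tameness with $e\equiv 1\pmod p$ already gives $v_\wp(\mathcal{D}_L)=e-1$ divisible by $p$, and wildness gives only $v_\wp(\mathcal{D}_L)\in[e,\,e-1+e\,v_p(e)]$, which need not be a multiple of $p$.

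The genuine flaw is in the prime-degree case of (B)$\Rightarrow$(A). You argue: ``then $p$ is ramified in $L$; since $[L:\mathbb{Q}]=p$ and $\sum e_if_i=p$, the only way to have any ramification is a single prime with $e=p$.'' That inference is false: ramification in prime degree does not force total ramification. For instance, in a quintic field one can have $5\mathcal{O}_L=\wp_1^2\wp_2$ with $e_1=2$, $f_1=1$, $e_2=1$, $f_2=3$, so $p$ is ramified but not totally ramified (and indeed $t_L$ need not exceed $1$ there). What you must use is not ramification but \emph{wild} ramification, which you do have available: by Lemma~\ref{lemma3} (equivalently by the (B)$\Rightarrow$(C) step you just proved), $p^d\mid\text{disc}(L)$ with $d=p$ forces $p\mid e_i$ for some $i$; then $e_i\ge p$ together with $\sum_j e_jf_j=p$ forces $s=1$, $f_1=1$, $e_1=p$, i.e.\ total ramification, and Theorem~\ref{theorem1}(ii) gives $t_L\ge 2$. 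This is exactly how the paper argues; with that correction your proof goes through, but as written the prime-degree step rests on a false claim rather than on the wildness input.
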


It follows immediately that the three assertions in the statement of Theorem
2 are equivalent when $L$ is normal. For example if $L=\mathbb{Q}(\sqrt{m})$
is a quadratic field, where $m$ is a squarefree integer satisfying $m\equiv 1%
\mod 4$ (resp. $m\equiv 2,3\mod 4),$ then $\text{disc}(L)=m,$ $%
t_{L}=1$ and $L$ is tame (resp. $\text{disc}(L)=2^{2}m,$ $\ t_{L}\geq 2$
and $L$ is wild$).$ Also, if $d$ is prime, then $t_{L}\geq 2\Leftrightarrow
d^{d}$ divides $\text{disc}(L).$

However, when $L$ is not normal, the statement \eqref{eq2} already fails for $d=6$: in fact the number field $L=\mathbb{Q}(\alpha)$, where $\alpha^6+\alpha^4+5\alpha^2+1=0$, has degree $d=6$ and discriminant $-2^{10}\cdot 13^2$, but the ideal $2\mathcal{O}_L$ factorizes as $\wp_1^4 \wp_2$, so that the trace must be surjective by Theorem \ref{theorem1}. This can be explicitly verified considering the algebraic integer

\[
\beta := \frac{\alpha^5+\alpha^4+2\alpha^3+2\alpha^2-\alpha-1}{4}
\]
which has trace $-7$. These explicit computations may be verified using the computer algebra PARI/GP \cite{pari}.

It is then of interest to determine the non-normal fields of composite degree $d\geq 6$ for which \eqref{eq2} remains true.\\

Now, suppose that $M$ is a finite extension of $L,$ and let $\beta \in 
\mathcal{O}_{M}$ be such that $T_{\mathcal{O}_{M}/\mathbb{Z}}(\beta )=t_{M}.$
Then, \eqref{eq1} gives $T_{\mathcal{O}_{L}/\mathbb{Z}}(\alpha )=t_{M},$ where $%
\alpha :=T_{\mathcal{O}_{M}/\mathcal{O}_{L}}(\beta )\in \mathcal{O}_{L},$
and so $t_{L}$ divides $t_{M}.$ In particular, 
\[
t_{M}=1\Rightarrow t_{L}=1, 
\]%
and a related question arises immediately: Let $L_{1},...,L_{r}$ be numbers
fields, satisfying $t_{L_{1}}=$ $...$ $=t_{L_{r}}=1,$ and let $%
L:=L_{1}...L_{r}$ be their compositum, that is, the intersection of all
subfields of $\mathbb{C}$ containing $\cup _{1\leq i\leq r}L_{r}.$ Is the
equality $t_{L}=1$ always true? The following theorem is a partial answer to
this question.

\begin{theorem}\label{theorem3}We have the following results.
\medskip
\begin{itemize}
    \item[(I)] Let $L$ be the compositum of some number
fields $L_{1},...,L_{r},$ satisfying \\
$\gcd (\deg (L_{i}),\deg
(L_{j}))=1,$\textit{\ }$\forall $\textit{\ }$1\leq i\neq j\leq r,$ 
and $t_{L_{i}}=1$ for all $1\leq i\leq r.$ Then,  $\deg
(L)=\prod\limits_{1\leq i\leq r}\deg (L_{i})$ and $t_{L}=1.$

\item[(II)] If $L$\textit{\ is the compositum of the quadratic fields }$%
\mathbb{Q}(\sqrt{m_{1}}),...,\mathbb{Q}(\sqrt{m_{r}}),$ \textit{where the
squarefree integers }$m_{1},...,\mathbb{\ }m_{r}$ \textit{satisfy \ }$%
m_{1}\equiv $ $...$ $\equiv m_{r}\equiv 1\mod 4,$ \textit{then }$%
t_{L}=1.$

\item[(III)] If $L$\textit{ and }$K$\textit{ are normal fields satisfying }$t_L=t_K=1,$\textit{ then }$t_{KL}=1.$
\end{itemize}
\end{theorem}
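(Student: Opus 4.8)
The plan is to treat the three parts by essentially independent arguments, reducing in each case either to the ramification criterion of Theorem~\ref{theorem1} or to the surjectivity of the trace for tame fields recalled in the introduction.

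\emph{Part (I).} Here I would avoid ramification altogether and work directly with the trace form. First, the pairwise coprimality of the degrees $d_i:=\deg(L_i)$ forces $\deg(L)=\prod_i d_i$: arguing by induction on $r$, if $d_r$ is coprime to $d_1\cdots d_{r-1}=\deg(L_1\cdots L_{r-1})$, then $[L_1\cdots L_r:\mathbb{Q}]$ is divisible by both $d_1\cdots d_{r-1}$ (as $L_1\cdots L_{r-1}\subseteq L$) and $d_r$ (as $L_r\subseteq L$), hence by their product, while it is always at most this product. Consequently $L_1,\dots,L_r$ are linearly disjoint over $\mathbb{Q}$ and the natural map $L_1\otimes_{\mathbb{Q}}\cdots\otimes_{\mathbb{Q}}L_r\to L$ is an isomorphism of $\mathbb{Q}$-algebras. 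The decisive point is the multiplicativity of the trace on this tensor product: for $\alpha_i\in L_i$, multiplication by $\alpha_1\otimes\cdots\otimes\alpha_r$ is the tensor product of the individual multiplication operators, so its trace factors and $T_{L/\mathbb{Q}}(\alpha_1\cdots\alpha_r)=\prod_i T_{L_i/\mathbb{Q}}(\alpha_i)$. Since $t_{L_i}=1$, each factor admits some $\alpha_i\in\mathcal{O}_{L_i}$ with $T_{L_i/\mathbb{Q}}(\alpha_i)=1$; then $\alpha:=\alpha_1\cdots\alpha_r\in\mathcal{O}_L$ satisfies $T_{L/\mathbb{Q}}(\alpha)=1$, so $1\in t_L\mathbb{Z}$ and $t_L=1$.

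\emph{Part (II).} Now the degrees are all $2$, so Part (I) does not apply, and instead I would show that the single relevant prime is unramified. Each $m_i\equiv 1\pmod 4$ being squarefree gives $\text{disc}(\mathbb{Q}(\sqrt{m_i}))=m_i$ odd, so $2$ is unramified in every $\mathbb{Q}(\sqrt{m_i})$. Since being unramified at a fixed prime is stable under taking composita (the relevant local composita lie inside the maximal unramified extension of $\mathbb{Q}_2$, which is Galois over $\mathbb{Q}_2$), the prime $2$ is unramified in $L=\mathbb{Q}(\sqrt{m_1},\dots,\sqrt{m_r})$. But $\deg(L)=2^k$ for some $k$, so $2$ is the only prime factor of $\deg(L)$, and above it all ramification indices equal $1$; in particular $2$ does not divide all of them. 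By Theorem~\ref{theorem1} this rules out assertion (ii), hence $t_L=1$. (Equivalently, every ramification index of an odd prime divides $2^k$ and is thus prime to that odd prime, so $L$ is tame.)

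\emph{Part (III).} Here I would invoke the equivalence, valid for normal fields, between $t=1$ and tameness. As $L$ and $K$ are normal with $t_L=t_K=1$, the implication $(C)\Rightarrow(A)$ of Theorem~\ref{theorem2}, read contrapositively, shows that both $L$ and $K$ are tame. The key input is that tamely ramified extensions are stable under compositum: for any prime $\mathfrak{P}$ of $KL$ above a rational prime $p$, the completion $(KL)_{\mathfrak{P}}$ is the compositum of the tame local fields $K_{\mathfrak{q}}$ and $L_{\mathfrak{r}}$, hence lies inside the maximal tamely ramified extension of $\mathbb{Q}_p$ and is itself tame. Thus $KL$ is tame, and the tame case of Theorem~\ref{theorem1} (recalled in the introduction) gives $t_{KL}=1$; note that normality of $KL$ is not needed for this last step.

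The main obstacle is not a computation but the structural fact, used in Parts (II) and (III), that unramified (respectively tamely ramified) extensions are closed under compositum; I would justify it via the Galois-theoretic description of the maximal unramified and maximal tamely ramified extensions of $\mathbb{Q}_p$. For Part (I) the only delicate point is the identification $L\cong\bigotimes_i L_i$ together with the resulting multiplicativity of the trace, which is precisely where the coprimality of the degrees enters.
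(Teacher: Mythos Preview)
Your argument is correct in all three parts, but your routes in (I) and (II) differ from the paper's.

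For (I), the paper also shows $\deg(L)=\prod_i\deg(L_i)$ by induction, but then, instead of your tensor-product multiplicativity $T_{L/\mathbb{Q}}(\alpha_1\cdots\alpha_r)=\prod_i T_{L_i/\mathbb{Q}}(\alpha_i)$, it reduces to $r=2$, picks $\alpha\in\mathcal{O}_K$ and $\beta\in\mathcal{O}_{L_r}$ of trace $1$, computes $T_{L/\mathbb{Q}}(\alpha)=\deg(L_r)$ and $T_{L/\mathbb{Q}}(\beta)=\deg(K)$ via the tower formula, and finishes with a B\'ezout combination $u\alpha+v\beta$. Your approach is slicker and avoids the B\'ezout step; the paper's stays closer to basic trace identities and never invokes linear disjointness or tensor products explicitly.

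For (II), the paper deliberately gives a \emph{constructive} proof: it exhibits the explicit integer $\alpha=\prod_i\frac{1+\sqrt{m_i}}{2}$ and checks directly that $T_{L/\mathbb{Q}}(\alpha)=1$ by showing every square-root cross term has trace zero. Your argument---$2$ is unramified in each $\mathbb{Q}(\sqrt{m_i})$, hence in their compositum, hence $L$ is tame and Theorem~\ref{theorem1} applies---is exactly the specialization of (III) to this case, which the paper acknowledges but chooses not to use here. Your route is shorter; the paper's produces a witness.

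For (III), your strategy coincides with the paper's: normal plus $t=1$ gives tame by Theorem~\ref{theorem2}, composita of tame extensions are tame, and tameness yields $t_{KL}=1$. The only difference is in justifying closure under compositum: you appeal to the maximal tamely ramified extension of $\mathbb{Q}_p$, while the paper argues via the injective restriction of inertia groups $G_{\mathfrak{b}}^{KL}\hookrightarrow G_{\mathfrak{q}}^{K}$ to get $e(\mathfrak{b}\mid\mathfrak{p})\mid e(\mathfrak{q}\mid p)$ and then multiplies ramification indices.
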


Statement (III) is a generalization of statement (II): we will provide a constructive proof of (II) before presenting the one of (III).

As mentioned above, the first two theorems, proved in the third section, are
mainly based on a related result of Dedekind. This result together with some
properties of the different of a number field are presented in the next
section. Theorem \ref{theorem3} is proved in the last section.

\section{Some properties of the different of a number field}

The aim of this section is to collect some properties of the different $%
\mathcal{D}_{L}$ of a number field $L.$\ Most of these properties may be
found in \cite{marcus1977number, narkiewicz2013elementary}. With the notation of the introduction, we have%
\[
\mathcal{D}_{L}\subset \mathcal{O}_{L}\text{ }\mathcal{\subset }\text{ }%
\widehat{\mathcal{O}_{L}}, 
\]%
and $\mathcal{\omega }_{i}=\sum\limits_{j=1}^{d}T_{\mathcal{O}_{L}/\mathbb{Z%
}}(\mathcal{\omega }_{i}\mathcal{\omega }_{j})\widehat{\mathcal{\omega }_{j}}%
,$ $\forall i\in \{1,...,d=\deg (L)\}.$ Hence, the index $[\widehat{\mathcal{%
O}_{L}}:\mathcal{O}_{L}\mathcal{]}$ of the groups extension $\mathcal{O}%
_{L}\subset \widehat{\mathcal{O}_{L}}$ is equal to the absolute value of $%
\text{disc}(L),$ and so we obtain, by the identity $[\widehat{\mathcal{O}%
_{L}}:\mathcal{O}_{L}]=$ $[\mathcal{O}_{L}:\widehat{\mathcal{O}_{L}}^{-1}%
\mathcal{]},$ that the (absolute) norm of the ideal $\widehat{\mathcal{O}_{L}%
}^{-1}=\mathcal{D}_{L}$ is equal to $\left\vert \text{disc}(L)\right\vert
.$ The following lemma is a key tool in proving the first two theorems.

\begin{lemma}\label{lemma1}
Let $t_{L}\mathbb{Z}$ be the range of
the trace operator on the ring $\mathcal{O}_{L}$ of the integers
of $L.$ Then, %
\begin{equation}\label{eq3}
\mathcal{D}_{L}\subset t_{L}\mathcal{O}_{L}.  \tag{3}
\end{equation}%
Moreover, if $\mathcal{D}_{L}\subset n\mathcal{O}_{L}$ 
for some $n\in \mathbb{N},$ then $t_{L}\mathcal{O}_{L}\subset n%
\mathcal{O}_{L}.$
\end{lemma}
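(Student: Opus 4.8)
The plan is to unwind the definitions of $t_L$ and of the codifferent, and to use the duality between $\mathcal{O}_L\subset\widehat{\mathcal{O}_L}$ and $\mathcal{D}_L=\widehat{\mathcal{O}_L}^{-1}\subset\mathcal{O}_L$. First recall that $t_L\mathbb{Z}=T_{L/\mathbb{Q}}(\mathcal{O}_L)$, which by $\mathbb{Z}$-linearity of the trace is equivalent to saying that $T_{L/\mathbb{Q}}(\mathcal{O}_L)\subset t_L\mathbb{Z}$, i.e. $T_{L/\mathbb{Q}}\!\left(\tfrac1{t_L}\mathcal{O}_L\right)\subset\mathbb{Z}$; more usefully, $\tfrac1{t_L}\in\widehat{\mathcal{O}_L}$, since for any $\gamma\in\mathcal{O}_L$ we have $T_{L/\mathbb{Q}}\!\left(\tfrac1{t_L}\gamma\right)=\tfrac1{t_L}T_{L/\mathbb{Q}}(\gamma)\in\mathbb{Z}$ by definition of $t_L$. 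So the key observation is $\tfrac1{t_L}\mathcal{O}_L\subset\widehat{\mathcal{O}_L}$, equivalently $\mathcal{O}_L\subset t_L\widehat{\mathcal{O}_L}$.

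Now I would invert this inclusion. Since $\mathcal{D}_L=\widehat{\mathcal{O}_L}^{-1}$ and inversion of fractional ideals is order-reversing, $\mathcal{O}_L\subset t_L\widehat{\mathcal{O}_L}$ gives $\mathcal{O}_L^{-1}=\mathcal{O}_L\supset (t_L\widehat{\mathcal{O}_L})^{-1}=\tfrac1{t_L}\mathcal{D}_L$, that is $\mathcal{D}_L\subset t_L\mathcal{O}_L$, which is \eqref{eq3}. Alternatively, to avoid invoking order-reversal abstractly, one can argue directly: for $\alpha\in\mathcal{D}_L$, the definition says $\alpha\widehat{\mathcal{O}_L}\subset\mathcal{O}_L$; combined with $\tfrac1{t_L}\mathcal{O}_L\subset\widehat{\mathcal{O}_L}$ this yields $\tfrac{\alpha}{t_L}\mathcal{O}_L\subset\alpha\widehat{\mathcal{O}_L}\subset\mathcal{O}_L$, hence $\tfrac{\alpha}{t_L}\in\mathcal{D}_L\subset\mathcal{O}_L$, so $\alpha\in t_L\mathcal{O}_L$.

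For the second statement, suppose $\mathcal{D}_L\subset n\mathcal{O}_L$ for some $n\in\mathbb{N}$. The same direct manipulation runs in reverse: I claim $\tfrac1n\mathcal{O}_L\subset\widehat{\mathcal{O}_L}$, i.e. that every $\beta=\tfrac1n\gamma$ with $\gamma\in\mathcal{O}_L$ lies in the codifferent. To see this, take any $\delta\in\widehat{\mathcal{O}_L}$ that is a denominator-generator — more concretely, it suffices to show $T_{L/\mathbb{Q}}(\tfrac1n\gamma\,\mathcal{O}_L)\subset\mathbb{Z}$, and here is where I pair against $\mathcal{D}_L$: for any $\rho\in\widehat{\mathcal{O}_L}$ we have $n\rho\in\mathcal{D}_L^{-1}\cdot(\ldots)$ — let me instead argue cleanly via inversion again. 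From $\mathcal{D}_L\subset n\mathcal{O}_L$ we get $\tfrac1n\mathcal{O}_L\subset\mathcal{D}_L^{-1}=\widehat{\mathcal{O}_L}$ wait, that's $\widehat{\mathcal{O}_L}\subset\tfrac1n\mathcal{O}_L$ — taking inverses of $\mathcal{D}_L\subset n\mathcal{O}_L=(\tfrac1n\mathcal{O}_L)^{-1}$ gives $\widehat{\mathcal{O}_L}=\mathcal{D}_L^{-1}\supset\tfrac1n\mathcal{O}_L$, i.e. $\tfrac1n\mathcal{O}_L\subset\widehat{\mathcal{O}_L}$ — good. Then for every $\gamma\in\mathcal{O}_L$, since $\tfrac1n\gamma\in\tfrac1n\mathcal{O}_L\subset\widehat{\mathcal{O}_L}$ and $1\in\mathcal{O}_L$, the definition of the codifferent forces $T_{L/\mathbb{Q}}(\tfrac1n\gamma\cdot 1)=\tfrac1n T_{L/\mathbb{Q}}(\gamma)\in\mathbb{Z}$. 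Hence $T_{L/\mathbb{Q}}(\gamma)\in n\mathbb{Z}$ for all $\gamma\in\mathcal{O}_L$, so $t_L\mathbb{Z}=T_{L/\mathbb{Q}}(\mathcal{O}_L)\subset n\mathbb{Z}$, which means $n\mid t_L$ and therefore $t_L\mathcal{O}_L\subset n\mathcal{O}_L$.

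The only real subtlety is keeping the directions of the fractional-ideal inclusions straight when inverting; the underlying facts needed — that $\widehat{\mathcal{O}_L}$ is a fractional ideal with inverse $\mathcal{D}_L$, and that ideal inversion reverses inclusions — are all recorded in the preceding section, so no new machinery is required. I expect the bookkeeping around $1/t_L$ and $1/n$ as elements of $L$ (rather than formal fractional-ideal symbols) to be the one place worth writing out carefully in the final proof.
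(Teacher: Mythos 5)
Your proof is correct and takes essentially the same route as the paper's: both reduce \eqref{eq3} to the key inclusion $t_L^{-1}\mathcal{O}_L\subset\widehat{\mathcal{O}_L}$ and then use that inversion of fractional ideals reverses inclusions, and your treatment of the second assertion is the paper's argument almost verbatim. One harmless slip in your alternative direct argument: from $\tfrac{\alpha}{t_L}\mathcal{O}_L\subset\mathcal{O}_L$ the correct conclusion is $\tfrac{\alpha}{t_L}\in\mathcal{O}_L$ (evaluate at $1$), not $\tfrac{\alpha}{t_L}\in\mathcal{D}_L$ (which is false in general once $t_L\geq 2$), but this does not affect the deduction $\alpha\in t_L\mathcal{O}_L$.
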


\begin{proof}
Since $\mathcal{D}_{L}\subset t_{L}\mathcal{O}%
_{L}\Leftrightarrow (t_{L}\mathcal{O}_{L})^{-1}\subset \mathcal{D}_{L}^{-1},$
to show the first assertion it suffices to prove that any element $\alpha $
of the fractional ideal $(t_{L}\mathcal{O}_{L})^{-1}=t_{L}^{-1}\mathcal{O}%
_{L}$ belongs to the codifferent $\mathcal{D}_{L}^{-1}=\widehat{\mathcal{O}%
_{L}}$ of $\mathcal{O}_{L}.$ In fact, this follows from the relations 
\[
\alpha \in t_{L}^{-1}\mathcal{O}_{L}\Leftrightarrow t_{L}\alpha \in \mathcal{%
O}_{L}\Rightarrow t_{L}\alpha \mathcal{O}_{L}\subset \mathcal{O}%
_{L}\Rightarrow T_{L/\mathbb{Q}}(t_{L}\alpha \mathcal{O}_{L})\subset t_{L}%
\mathbb{Z}, 
\]
and 
\[
t_{L}T_{L/\mathbb{Q}}(\alpha \mathcal{O}_{L})\subset t_{L}\mathbb{Z}%
\Leftrightarrow T_{L/\mathbb{Q}}(\alpha \mathcal{O}_{L}\mathcal{)}\subset 
\mathbb{Z}\Leftrightarrow \alpha \in \widehat{\mathcal{O}_{L}}. 
\]%
Similarly, if\textit{\ }$\mathcal{D}_{L}\subset n\mathcal{O}_{L},$ then $%
n^{-1}\mathcal{O}_{L}\subset \widehat{\mathcal{O}}_{L},$ and 
\[
n^{-1}t_{L}\mathbb{Z=}n^{-1}T_{L/\mathbb{Q}}(\mathcal{O}_{L})=T_{L/\mathbb{Q}%
}(n^{-1}\mathcal{O}_{L}\mathcal{)}\subset T_{L/\mathbb{Q}}(\widehat{\mathcal{%
O}_{L}})\subset \mathbb{Z}. 
\]%
Hence, $n^{-1}t_{L}\mathbb{Z}\subset \mathbb{Z},$ so that $n^{-1}t_{L}\in 
\mathbb{Z},$ $n$ divides $t_{L},$ and thus $t_{L}\mathcal{O}_{L}\subset n%
\mathcal{O}_{L}.$%
\end{proof}

The lemma below is an immediate corollary of a theorem of Dedekind
(see for instance \cite[Theorem 4.24]{narkiewicz2013elementary}).

\begin{lemma}\label{lemma2}
Let $\wp $ be a  prime ideal of 
$\mathcal{O}_{L}$ lying over a prime number $p$ with a
ramification index $e.$ If $p$ does not divide (
resp. $p$ divides)\textit{\ }$e,$ then the exact power
of $\wp $ in the prime factorization of the ideal $\mathcal{D}_{L}$
is $e-1$ (resp. is some natural number $n\in \lbrack
e,e-1+ev_{p}(e)],$ where $v_{p}(e)$ is the $p$
-adic valuation of $e$).
\end{lemma}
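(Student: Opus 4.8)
The plan is to read off both assertions directly from Dedekind's theorem on the different, as cited in \cite[Theorem 4.24]{narkiewicz2013elementary}. Applied to the prime $\wp$ of $\mathcal{O}_{L}$ above $p$ with ramification index $e$, that theorem provides the two-sided bound
\[
e-1\ \le\ v_{\wp}(\mathcal{D}_{L})\ \le\ e-1+v_{\wp}(e),
\]
where $v_{\wp}$ denotes the $\wp$-adic valuation, \emph{together with} the refinement that the left-hand inequality is an equality if and only if $\wp$ is tamely ramified over $p$; over $\mathbb{Q}$ (indeed over any local field with finite, hence perfect, residue field) this last condition is simply $p\nmid e$, since the residue extension is automatically separable. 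Write $n:=v_{\wp}(\mathcal{D}_{L})$ for the exact power of $\wp$ in the prime factorization of $\mathcal{D}_{L}$.

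First I would dispose of the tame case: if $p\nmid e$, the equality clause gives $n=e-1$ at once, which is the first assertion. For the wild case $p\mid e$, the same equality clause says $n\neq e-1$, and combined with $n\ge e-1$ this yields the lower bound $n\ge e$. To put the upper bound in the shape stated in the lemma, I would rewrite $v_{\wp}(e)$ as a $p$-adic valuation: from the factorization $p\mathcal{O}_{L}=\wp^{e}\wp_{2}^{e_{2}}\cdots$ we have $v_{\wp}(p)=e$, hence $v_{\wp}(m)=e\,v_{p}(m)$ for every nonzero $m\in\mathbb{Z}$; taking $m=e$ gives $v_{\wp}(e)=e\,v_{p}(e)$. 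Substituting into the upper estimate gives $n\le e-1+e\,v_{p}(e)$, so $n\in[\,e,\ e-1+e\,v_{p}(e)\,]$, as asserted (note this interval is nonempty since $p\mid e$ forces $v_{p}(e)\ge 1$).

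There is no genuinely hard step here once Dedekind's theorem is in hand; the only points requiring a word of care are the inference ``$n\neq e-1\Rightarrow n\ge e$'' in the wild case --- which is precisely the equality clause of that theorem, and not merely the crude bound $n\ge e-1$ --- and the elementary identity $v_{\wp}(e)=e\,v_{p}(e)$ used to translate the upper bound into the form appearing in the statement.
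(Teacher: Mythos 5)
Your proposal is correct and follows essentially the same route as the paper, which simply presents the lemma as an immediate corollary of Dedekind's theorem as stated in \cite[Theorem 4.24]{narkiewicz2013elementary} (with the upper bound attributed to Hensel). The only difference is that you make explicit the bookkeeping the paper leaves implicit, namely the equality clause in the tame case and the conversion $v_{\wp}(e)=e\,v_{p}(e)$ giving the stated upper bound $e-1+e\,v_{p}(e)$, both of which are handled correctly.
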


In fact, the above mentioned upper bound $e-1+ev_{p}(e)$ \ for the power $n$
of the prime ideal $\wp ,$ when $v_{p}(e)\geq 1,$ was conjectured by
Dedekind and proved by Hensel.

To make clear the proof of Theorem 2, we will also use the following simple
corollary of Lemma 2.

\begin{lemma}\label{lemma3}
\textit{Let }$p$\textit{\ be a prime number such that }$%
p^{n}$ \textit{divides} $\text{disc}(L)$ \textit{for some rational integer%
} $n\geq d.$ \textit{Then, }$p$\textit{\ is wildly ramified in }$L.$
\end{lemma}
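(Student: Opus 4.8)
The plan is to prove Lemma \ref{lemma3} as a direct consequence of Lemma \ref{lemma2} together with the formula relating $\text{disc}(L)$ to the norm of the different $\mathcal{D}_L$. First I would recall that $|\text{disc}(L)|$ equals the absolute norm $N(\mathcal{D}_L)$, as established at the end of the present section. Writing the prime factorization $\mathcal{D}_L = \prod_{j} \mathfrak{P}_j^{n_j}$ and taking norms, $|\text{disc}(L)| = \prod_j N(\mathfrak{P}_j)^{n_j} = \prod_j p_j^{f_j n_j}$, where $p_j$ is the rational prime below $\mathfrak{P}_j$ and $f_j$ its residue degree. Hence the exact power of $p$ dividing $\text{disc}(L)$ is $\sum_{\mathfrak{P} \mid p} f(\mathfrak{P}) \, n_{\mathfrak{P}}$, where the sum runs over the prime ideals of $\mathcal{O}_L$ above $p$ and $n_{\mathfrak{P}}$ denotes the exact power of $\mathfrak{P}$ in $\mathcal{D}_L$.

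Next I would argue by contraposition: suppose $p$ is \emph{tamely} ramified in $L$, so that $p \nmid e(\mathfrak{P})$ for every prime ideal $\mathfrak{P}$ of $\mathcal{O}_L$ lying over $p$. By Lemma \ref{lemma2}, this forces $n_{\mathfrak{P}} = e(\mathfrak{P}) - 1$ for each such $\mathfrak{P}$. Plugging this into the displayed norm identity, the exact power of $p$ dividing $\text{disc}(L)$ equals $\sum_{\mathfrak{P}\mid p} f(\mathfrak{P})\bigl(e(\mathfrak{P}) - 1\bigr) = \sum_{\mathfrak{P}\mid p} f(\mathfrak{P})e(\mathfrak{P}) - \sum_{\mathfrak{P}\mid p} f(\mathfrak{P})$. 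The first sum equals $d = \deg(L)$ by the fundamental identity $\sum e_i f_i = d$, and the second sum is strictly positive (there is at least one prime above $p$, with $f \geq 1$), so the exact power of $p$ in $\text{disc}(L)$ is at most $d - 1$. This contradicts the hypothesis that $p^n \mid \text{disc}(L)$ for some $n \geq d$; therefore $p$ must be wildly ramified in $L$.

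I do not expect any real obstacle here: the only ingredients are the norm-of-the-different formula (already recorded above), Lemma \ref{lemma2}, and the degree identity $\sum_i e_i f_i = d$, which is standard and may be cited from \cite{marcus1977number} or \cite{narkiewicz2013elementary}. The one point demanding a little care is bookkeeping the residue degrees $f(\mathfrak{P})$ correctly when passing from $N(\mathcal{D}_L)$ to the $p$-adic valuation of $\text{disc}(L)$, since in the tame case the naive bound ``power of $\wp$ is $e-1$'' must be weighted by $f(\wp)$ before summing; keeping the fundamental identity in the weighted form $\sum f_i e_i = d$ makes this transparent.
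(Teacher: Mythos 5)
Your proposal is correct and follows essentially the same route as the paper: argue by contraposition, use Lemma \ref{lemma2} to get exponent $e_i-1$ for each prime above $p$ in $\mathcal{D}_L$, and combine the norm-of-the-different formula with $\sum_i e_i f_i = d$ to bound the $p$-adic valuation of $\text{disc}(L)$ by $d-1 < n$. No gaps; the bookkeeping with the residue degrees is exactly what the paper does.
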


\begin{proof}
    Let $\wp _{1}^{e_{1}}...\wp _{s}^{e_{s}}$ be the prime
factorization of the ideal $p\mathcal{O}_{L}$ in $\mathcal{O}_{L},$ and let $%
p^{f_{1}},...,p^{f_{s}}$ be the norms of the prime ideals $\wp _{1},...,\wp
_{s},$ respectively. Then, the norm $p^{d}$ of the principal ideal $p%
\mathcal{O}_{L}$ is also equal $p^{e_{1}f_{1}+\cdot \cdot \cdot
+e_{s}f_{s}}, $ and so $d=e_{1}f_{1}+\cdot \cdot \cdot +e_{s}f_{s}.$

Assume that $p$ is tamely ramified in $L,$ i. e., $p$ does not divide any $%
e_{i},$ $\forall i\in \{1,...,s\}.$ Then, Lemma 2\textbf{\ }gives that the
exact power of each prime $\wp _{i}$ in the prime factorization of $\mathcal{%
D}_{L}$ is $e_{i}-1,$ and so the exact power of $p$ in $\text{disc}(L)$
is $(e_{1}-1)f_{1}+\cdot \cdot \cdot +(e_{s}-1)f_{s}.$ This leads
immediately to a contradiction, because 
\[
(e_{1}-1)f_{1}+\cdot \cdot \cdot +(e_{s}-1)f_{s}=d-(f_{1}+\cdot \cdot \cdot
+f_{s})\leq d-s<d\leq n; 
\]%
thus $p$ is wildly ramified.%
\end{proof}

\section{Proofs of the first two theorems}

\textbf{Proof of Theorem 1. } To show the implication$\ (i)\Rightarrow (ii),$
suppose $t_{L}\geq 2$ and consider a prime factor $p$ of $t_{L}.$ Then, $p$
divides $d,$ since $t_{L}$ is a factor of $d.$ Also, the inclusion $t_{L}%
\mathcal{O}_{L}\subset p\mathcal{O}_{L}$ together with \eqref{eq3}, yields $%
\mathcal{D}_{L}\subset p\mathcal{O}_{L}.$ Let $\wp _{1}^{e_{1}}...\wp
_{s}^{e_{s}}$ be the prime factorization of the ideal $p\mathcal{O}_{L}$ in $%
\mathcal{O}_{L}.$ Then, 

\[
\mathcal{D}_{L}\subset \wp _{i}^{e_{i}},\text{ }\forall i\in \{1,...,s\}, 
\]%
i. e., each ideal $\wp _{i}^{e_{i}}$ divides $\mathcal{D}_{L},$ and so, by
Lemma \eqref{lemma2}, $p$ divides all powers $e_{1},...,e_{s}.$

It is also clear that the implication $(ii)\Rightarrow (iii)$ is trivially
true. In order to complete the proof suppose that $(iii)$ holds. Then, Lemma
\ref{lemma2} gives that each $\wp _{i}^{e_{i}}$ divides $\mathcal{D}_{L}.$ Since $\gcd
(\wp _{i}^{e_{i}},\wp _{j}^{e_{j}})=\mathcal{O}_{L}$ for all $1\leq i\neq
j\leq r,$ we see that $p\mathcal{O}_{L}\mathcal{=}\wp _{1}^{e_{1}}...\wp
_{s}^{e_{s}}$ divides $\mathcal{D}_{L},$ i. e., $\mathcal{D}_{L}\subset p%
\mathcal{O}_{L},$ and it follows, by the second assertion in Lemma \ref{lemma1}, that $%
t_{L}\mathcal{O}_{L}\subset p\mathcal{O}_{L}.$ Therefore, $t_{L}=p\eta $ for
some $\eta \in \mathcal{O}_{L},$ so that $\eta =t/p\in \mathcal{O}_{L}\cap 
\mathbb{Q}=\mathbb{Z},$ and as $p$ divides $t_{L},$ we have $t_{L}\geq p\geq
2$ and hence $(i)$ is true.$\bigskip $%

\textbf{Proof of Theorem 2. }Suppose $t_{L}\geq 2.$ Then, from the proof of
the implication$\ (i)\Rightarrow (ii)$ in Theorem \ref{theorem1} we see that there is a
prime number $p,$ dividing $d,$ such that $\mathcal{D}_{L}\subset p\mathcal{O%
}_{L},$ and so the norm $p^{d}$ of the principal ideal $p\mathcal{O}_{L}$
divides the norm $\left\vert \text{disc}(L)\right\vert $ of $\mathcal{D}%
_{L};$ thus the relation $(A)\Rightarrow (B)$ is true. Furthermore, the implication $(B)\Rightarrow (C)$ is true\ thanks to Lemma \ref{lemma3}.

Now, assume that $L$ is wild and normal, and let $\wp _{1}^{e_{1}}...\wp
_{s}^{e_{s}}$ be the prime factorization of the ideal $p\mathcal{O}_{L}$ in $%
\mathcal{O}_{L},$ where $p$ is a wildly ramified prime number in $L.$ Since,
the Galois group of the extension $L/\mathbb{Q}$ acts transitively on the
prime ideals $\wp _{1},...,\wp _{s},$ we see that $e_{1}=...=e_{s}.$
Therefore, $p$ divides all powers $e_{i},$ $\forall i\in \{1,...,s\},$ and
we obtain $t_{L}\geq 2$ from Theorem \ref{theorem1}.

Similarly, if $(B)$ holds and $d$ is prime, then $p=d$ \ and Lemma \ref{lemma3} gives
that $p$ ramifies wildly in $L.$ As above, if $\wp _{1}^{e_{1}}...\wp
_{s}^{e_{s}}$ denotes the prime factorization of the ideal $p\mathcal{O}_{L}$
in $\mathcal{O}_{L},$ where $p$ divides some $e_{i},$ say $e_{1}.$ Then, $%
e_{1}\geq p$ and $e_{1}f_{1}+\cdot \cdot \cdot +e_{s}f_{s}=p,$ where $%
p^{f_{1}},...,p^{f_{s}}$ denote the norms of the prime ideals $\wp
_{1},...,\wp _{s},$ respectively. But this forces $s=f_{1}=1,$ $e_{1}=p,$ $p%
\mathcal{O}_{L}\mathcal{=}\wp _{1}^{p},$ i. e., $p$ ramifies totally in $%
\mathcal{O}_{L},$ and so we obtain that the assertion $(A)$ is true thanks
to Theorem \ref{theorem1}.

Finally, if $(B)$ holds and $d=4,$ then $p=2$\ and as before we have that
that $p$ is wildly ramified and $e_{1}f_{1}+\cdot \cdot \cdot +e_{s}f_{s}=4,$
so that $s\leq 3$ and the prime factorization of the ideal $2\mathcal{O}_{L}$
has one of the following shapes: $\wp _{1}^{2},$ $\wp _{1}^{4},$ $\wp
_{1}^{2}\wp _{2}^{2},$ $\wp _{1}^{2}\wp _{2},$ $\wp _{1}^{2}\wp _{2}\wp
_{3}. $ Also, the implication $(iii)\Rightarrow (i)$ in Theorem \ref{theorem1} implies
that $(A) $ is true when $2\mathcal{O}_{L}\in \{\wp _{1}^{2},$ $\wp
_{1}^{4}, $ $\wp _{1}^{2}\wp _{2}^{2}\},$ and in order to conclude it enough
to show that $2\mathcal{O}_{L}\notin \{\wp _{1}^{2}\wp _{2},\wp _{1}^{2}\wp
_{2}\wp _{3}\}.$ Assume that $2\mathcal{O}_{L}=$ $\wp _{1}^{2}\wp _{2}.$
Then, $f_{1}=e_{2}=1, $ $f_{2}=e_{1}=2,$ and Lemma \ref{lemma2} says that $\wp _{2}$
does not appear in the prime factorization of $\mathcal{D}_{L}$ and there is
natural number $n\leq (2-1)+2 v_{2}(2)$ $=3$ such that $\wp _{1}^{n}$
divides $\mathcal{D}_{L}. $ Hence, $v_{2}(\text{disc}(L))\leq 3$ and this
contradicts the fact that $2^{4}$ divides $\text{disc}(L).$ The same
arguments lead to a contradiction when $2\mathcal{O}_{L}=$ $\wp _{1}^{2}\wp
_{2}\wp _{3},$ since in this case $f_{1}=f_{2}=f_{3}=e_{2}=e_{3}=1$ and $%
e_{1}=2.$%
%TCIMACRO{\TeXButton{End Proof}{\endproof}}%
%BeginExpansion
\endproof%
%EndExpansion

\bigskip

\textbf{Remark.} By the same way as in the proof of the last assertion in
Theorem \ref{theorem2}, we have that $t_{L}\geq 2$ when $L$ is wild and the wildly
ramified prime number $p$ is uniformly ramified, i. e., if $\wp
_{1}^{e_{1}}...\wp _{s}^{e_{s}}$ is the prime factorization of the ideal $p%
\mathcal{O}_{L}$ in $\mathcal{O}_{L},$ then $e_{1}=...=e_{s}.$ The
definition of uniformly ramified prime is inspired by the behavior of
ramification in normal fields (see for instance \cite{mantilla2020introduction}).

\section{Proof of Theorem 3}

\textbf{(I)} We use induction on $n\in \{1,...,r\}.$ There is no thing to
prove when $r=1.$ Suppose $r\geq 2$ and Theorem \ref{theorem3}.(I) is \ true for every $%
n\in \{1,...,r-1\}.$ Let $K$ be the compositum of $L_{1},...,L_{r-1}.$ The
induction hypothesis gives that $\deg (K)=\prod\limits_{1\leq i\leq
r-1}\deg (L_{i}),$\textit{\ }and $t_{K}=1.$

Let $L:=L_{r}K.$ Then, $\deg (K)$ and $\deg (L_{r})$ are factors of $\deg
(L),$ and $\deg (K)\deg (L_{r})$ divides $\deg (L),$ since 
\[
(\gcd (\deg (L_{i}),\deg (L_{r}))=1,\mathit{\ }\forall i\in
\{1,...,r-1\})\Rightarrow \gcd (\deg (K),\deg (L_{r}))=1. 
\]
On the other hand, if $\theta $ is a primitive element of $L_{r}$ over $%
\mathbb{Q},$ then $L_{r}K=K(\theta ),$ $L_{r}=\mathbb{Q}(\theta ),$ and so $%
[L_{r}K:K]\leq \lbrack L_{r}:\mathbb{Q}],$ as the conjugates of $\theta $
over $K$ are among the conjugates of $\theta $ over $\mathbb{Q}.$

It follows by the relations 
\[
\deg (L)=[L_{r}K:K][K:\mathbb{Q}]\leq \lbrack L_{r}:\mathbb{Q}][K:\mathbb{Q}%
]=\deg (L_{r})\deg (K), 
\]%
that $\deg (L)=\deg (L_{r})\deg (K),$ i. e., $\deg (L)=\prod\limits_{1\leq
i\leq r}\deg (L_{i}),$ 
\begin{equation}\label{eq4}
\lbrack L:L_{r}]=[L:\mathbb{Q}]/[L_{r}:\mathbb{Q}]=\deg (K)\text{\ and\ }%
[L:K]=[L_{r}K:K]=\deg (L_{r}).  \tag{4}
\end{equation}%
Now, fix an element $\alpha $ of $\mathcal{O}_{K}$ (resp. an element $\beta $
of $\mathcal{O}_{L_{r}}$) \ such that $T_{K/\mathbb{Q}}(\alpha )=1$ (resp.
such that $T_{L_{r}/\mathbb{Q}}(\beta )=1).$ Then, we have, by \eqref{eq1} and \eqref{eq4}, 
\[
T_{L/\mathbb{Q}}(\alpha )=T_{K/\mathbb{Q}}(T_{L/K}(\alpha ))=T_{K/\mathbb{Q}%
}(\deg (L_{r})\alpha )=\deg (L_{r})T_{K/\mathbb{Q}}(\alpha )=\deg (L_{r}) 
\]%
(resp.

\[
T_{L/\mathbb{Q}}(\beta )=T_{L_{r}/\mathbb{Q}}(T_{L/L_{r}}(\beta ))=T_{L_{r}/%
\mathbb{Q}}(\deg (K)\beta )=\deg (K)T_{L_{r}/\mathbb{Q}}(\beta )=\deg (K)). 
\]%
To conclude it suffices to consider the algebraic integer 
\[
u\alpha +v\beta \in \mathcal{O}_{L}, 
\]%
where the pair $(u,v)\in \mathbb{Z}^{2}$ satisfies $u\deg (L_{r})+v\deg
(K)=1,$ and so we obtain 
\[
T_{L/\mathbb{Q}}(u\alpha +v\beta )=uT_{L/\mathbb{Q}}(\alpha )+vT_{L/\mathbb{Q%
}}(\beta )=u\deg (L_{r})+v\deg (K)=1. 
\]

\textbf{(II)} Let $L=\mathbb{Q}(\sqrt{m_{1}},...,\sqrt{m_{r}}).$ By
rearranging if the necessary the integers $m_{1},...,m_{r},$ set $L=\mathbb{Q%
}(\sqrt{m_{1}},...,\sqrt{m_{s}}),$ where $s$ is minimal. Then, 

\begin{equation}\label{eq5}
\sqrt{m_{i}}\notin \mathbb{Q}(\sqrt{m_{1}},...,\sqrt{m_{i-1}}),  \tag{5}
\end{equation}%

and $[\mathbb{Q}(\sqrt{m_{1}},...,\sqrt{m_{i}}):\mathbb{Q}(\sqrt{m_{1}},...,%
\sqrt{m_{i-1}})]=2,$ $\forall $ $i\in \{2,...,s\};$ thus $\deg (L)=[\mathbb{Q%
}(\sqrt{m_{1}}):\mathbb{Q}]\prod\limits_{2\leq i\leq s}[\mathbb{Q}(\sqrt{%
m_{1}},...,\sqrt{m_{i}}):\mathbb{Q}(\sqrt{m_{1}},...,\sqrt{m_{i-1}})]=2^{s}.$

We also have that $(1+\sqrt{m_{i}})/2\in \mathcal{O}_{\mathbb{Q}(\sqrt{m_{i}}%
)}\subset \mathcal{O}_{L}$ $\ $and the conjugates (over $\mathbb{Q}$) of $(1+%
\sqrt{m_{i}})/2$ are $(1\pm \sqrt{m_{i}})/2,$ $\forall $ $i\in \{1,...,s\}.$
\ Let 
\[
\alpha :=\prod\limits_{1\leq i\leq s}\frac{1+\sqrt{m_{i}}}{2}=\frac{%
1+\sum_{1\leq i\leq s}\sqrt{m_{i}}+\sum_{1\leq i<j\leq s}\sqrt{m_{i}m_{j}}%
+\cdot \cdot \cdot +\sqrt{m_{1}...m_{s}}}{2^{s}}. 
\]%
Then, $\alpha \in \mathcal{O}_{L}$ and 
\[
T_{L/\mathbb{Q}}(\alpha )=\frac{2^{s}+\sum_{1\leq i\leq s}T_{L/\mathbb{Q}}(%
\sqrt{m_{i}})+\sum_{1\leq i<j\leq s}T_{L/\mathbb{Q}}(\sqrt{m_{i}m_{j}}%
)+\cdot \cdot \cdot +T_{L/\mathbb{Q}}(\sqrt{m_{1}...m_{s}})}{2^{s}}. 
\]

In order to obtain the desired result it is enough to show that $T_{L/%
\mathbb{Q}}(\alpha )=1,$ or more precisely $T_{L/\mathbb{Q}}(\beta )=0$ for
any algebraic integer $\beta $ of the form $\sqrt{m_{i_{1}}...m_{i_{j}}},$
where $1\leq i_{1}<\cdot \cdot \cdot <i_{j}\leq s$ \ and $j\in \{1,...,s\}.$
Clearly, $\beta ^{2}\in \mathbb{Z}\Rightarrow \deg (\beta )\leq 2,$ and from 
\eqref{eq5} we get $\beta \notin \mathbb{Z},$ since otherwise there would be $%
j\geq 2$ such that $\sqrt{m_{i_{j}}}\in \mathbb{Q}(\sqrt{%
m_{i_{1}}...m_{i_{j-1}}})\subset \mathbb{Q}(\sqrt{m_{1}},...,\sqrt{%
m_{i_{j}-1}}).$ Hence, $\deg (\beta )=2,$ the conjugates of $\beta $ are $%
\pm \beta ,$ and $T_{L/\mathbb{Q}}(\beta )=2^{s-1}\beta +2^{s-1}(-\beta )=0.$%

\bigskip
\textbf{(III)} By Theorem \ref{theorem2}, a normal field $L$ satisfies $t_L=1$ if and only if $L$ is tame. In order to prove our statement, it suffices to prove that the normal field $KL$ is tame. 

Let $p$ be a ramified prime number in $KL$ and $\mathfrak{b}$ a prime in $\mathcal{O}_{KL}$ dividing $p$; let $\mathfrak{p}:= \mathfrak{b}\cap \mathcal{O}_L$ and $\mathfrak{q}:=\mathfrak{b}\cap \mathcal{O}_K$. Let $G_{\mathfrak{b}}^{KL}$ be the decomposition group of $\mathfrak{b}$ in the Galois group of $KL/L$ and $G_{\mathfrak{q}}^K$ the decomposition group of $\mathfrak{q}$ in $K/\mathbb{Q}$. There is an injective restriction homomorphism $G_{\mathfrak{b}}^{KL} \to G_{\mathfrak{q}}^K$ which restricts to an injective homomorphism between the corresponding inertia subgroups: in particular, the ramification index $e(\mathfrak{b}|\mathfrak{p})$ divides the ramification index $e(\mathfrak{q}|p)$, and since the last one is not a multiple of $p$, so is the first one. Now, by multiplicativity we have $e(\mathfrak{b}|p) = e(\mathfrak{b}|\mathfrak{p})\cdot e(\mathfrak{p}|p)$ and both the factors are not multiples of $p$; since every other factor of $p$ in $KL$ has the same ramification index thanks to the normality of $KL$, we have that $p$ is tamely ramified, and doing this for every ramified prime we obtain that $KL$ is tame.\\


\begin{thebibliography}{1}

\bibitem{battistoniTrace}
Francesco Battistoni.
\newblock Discriminants of number fields and surjectivity of trace
  homomorphisms on rings of integers.
\newblock {\em Rendiconti Sem. Mat. Univ. Pol. Torino}, 78(1):13--19, 2020.

\bibitem{mantilla2020introduction}
Guillermo Mantilla-Soler.
\newblock An introduction to oddly tame number fields.
\newblock {\em Journal de Th{\'e}orie des Nombres de Bordeaux}, 32(3):711--717,
  2020.

\bibitem{marcus1977number}
Daniel~A. Marcus and Emanuele Sacco.
\newblock {\em Number fields}, volume~2.
\newblock Springer, 1977.

\bibitem{narkiewicz2013elementary}
Władysław Narkiewicz.
\newblock {\em Elementary and analytic theory of algebraic numbers}.
\newblock Springer Science \& Business Media, 2013.

\bibitem{pari}
{PARI~Group}, Univ. Bordeaux.
\newblock {\em {PARI/GP version {\tt 2.12.0}}}, 2020.
\newblock available at \url{http://pari.math.u-bordeaux.fr/}.

\end{thebibliography}
\end{document}